\numberwithin{equation}{section}
\newtheorem{theorem}{Theorem}[section]
\newtheorem{lemma}{Lemma}[section]
\newtheorem{prop}{Proposition}[section]
\newtheorem{remark}{Remark}[section]
\newtheorem{con}{Conclusion}[section]
\newcommand{\R}{\mathbb{R}}
\begin{document}
\title[Blowup of solutions  to the thermal boundary layer problem]
{Blowup of solutions  to the thermal boundary layer problem in two-dimensional incompressible heat conducting flow}

\author{Y.-G. Wang}
\address{Ya-Guang Wang
\newline\indent
School of Mathematical Sciences, MOE-LSC and SHL-MAC, Shanghai Jiao Tong University,
Shanghai 200240, P. R. China}
\email{ygwang@sjtu.edu.cn}

\author{S.-Y. Zhu}
\address{Shi-Yong Zhu
\newline\indent
School of Mathematical Sciences, Shanghai Jiao Tong University,
Shanghai 200240, P. R. China}
\email{shiyong\_zhu@sjtu.edu.cn}


\subjclass[2000]{35Q30, 76N20}

\date{}

\keywords{Thermal boundary layer problem, blowup of solutions}

\begin{abstract} In this paper, we study the formation of finite time singularities for the solution of the boundary layer equations in the two-dimensional incompressible heat conducting flow. We obtain that the first spacial derivative of the solution blows up in a finite time for the thermal boundary layer problem, for a kind of data which are analytic in the tangential variable but do not satisfy the Oleinik monotonicity condition, by constructing a Lyapunov functional. Moreover, it is observed that the buoyancy coming from the temperature difference in the flow may destabilize the thermal boundary layer. 

\end{abstract}

\maketitle


\section{Introduction}\setcounter{section}{1}\setcounter{equation}{0}
This paper is devoted to the study of blowup of solutions to the boundary layer equations in the two-dimensional incompressible heat conducting flow. Consider the following problem in $\{t>0,(x,y)\in\mathbb{R}\times\mathbb{R}^{+}\}$, 
\begin{equation}\label{1.1}
\begin{cases}
\partial_{x}u+\partial_{y}v=0,\\
\partial_{t}u+u\partial_{x}u+v\partial_{y}u=\partial_{y}^{2}u-\partial_{x}P-(\theta-\theta_{\infty}),\\
\partial_{t}\theta+u\partial_{x}\theta+v\partial_{y}\theta=\partial_{y}^{2}\theta+(\partial_{y}u)^{2},\\
u|_{y=0}=v|_{y=0}=0,~\theta|_{y=0}=\bar{\theta},~\lim\limits_{y\to+\infty}(u,\theta)=(U^{E},\theta^{E}),\\
u|_{t=0}=u_{0},~\theta|_{t=0}=\theta_{0},
\end{cases}
\end{equation}
where $(u,v)$ is the velocity field, $\theta$ is the temperature, $U^{E}(t,x)$, $\theta^{E}(t,x)$ and $P(t,x)$ are the traces at the boundary of the tangential velocity, temperature and pressure of the outer Euler flow respectively, $\theta_{\infty}$ is the reference temperature, which is assumed to be a positive constant. The states $U^{E},~\theta^{E}$ and $P$ are interrelated through
\begin{align}\label{1.2}
\partial_{t}U^{E}+U^{E}\partial_{x}U^{E}=-\partial_{x}P-\left(\theta^{E}-\theta_{\infty}\right),
\end{align}
and
\begin{align}\label{1.2-1}
\partial_{t}\theta^{E}+U^{E}\partial_{x}\theta^{E}=0.
\end{align}

The boundary layer problem \eqref{1.1} describes the behavior of the thermal layer and viscous layer,  in the small viscosity and heat conductivity limit for the following two-dimensional incompressible heat conducting Navier-Stokes system including the buoyancy force with non-slip boundary condition,
\begin{equation}\label{1.3}
\begin{cases}
\partial_{x}u+\partial_{Y}v=0,\quad\quad\quad\quad\quad\quad t>0,(x,Y)\in\mathbb{R}\times\mathbb{R}_{+},\\
\partial_{t}u+u\partial_{x}u+v\partial_{Y}u+\frac{1}{\rho}\partial_{x}p+ \nu g_{x}(\theta-\theta_{\infty})
=\epsilon\frac{1}{\rho}\left(\partial_{x}^{2}u+\partial_{Y}^{2}u\right),\\
\partial_{t}v+u\partial_{x}v+v\partial_{Y}v+\frac{1}{\rho}\partial_{Y}p+ \nu g_{Y}(\theta-\theta_{\infty})
=\epsilon\frac{1}{\rho}\left(\partial_{x}^{2}v+\partial_{Y}^{2}v\right),\\
\rho c\left(\partial_{t}\theta+u\partial_{x}\theta+v\partial_{Y}\theta\right)
=\kappa\left(\partial_{x}^{2}\theta+\partial_{Y}^{2}\theta\right)+\epsilon\Phi(t,x,Y),\\
u|_{Y=0}=v|_{Y=0}=0,~\theta|_{Y=0}=\bar{\theta},\\
u|_{t=0}=u_{in},~\theta|_{t=0}=\theta_{in}
\end{cases}
\end{equation}
where $\rho$ stands for density, $p$ is the pressure, $c$ is specific heat capacity, $\epsilon$ is the viscosity, $\kappa$ is the heat conductivity, the dissipation function $\Phi(t,x,Y)$ is given by
\begin{align*}
\Phi(t,x,Y)=2\left[(\partial_{x}u)^{2}+(\partial_{Y}v)^{2}\right]+\left(\partial_{Y}u+\partial_{x}v\right)^{2}.
\end{align*}
and $(\nu g_{x}(\theta-\theta_{\infty}),~\nu g_{Y}(\theta-\theta_{\infty}))$
comes from the buoyancy force, with $\nu$ being the coefficient of thermal expansion at temperature $\theta_{\infty}$, and $g=(g_{x},g_{Y})$ the vector of gravitational acceleration. One can consult \cite{S} for the physical background of this model. For simplicity of presentation, we assume that $\rho=c=\nu=1$ in \eqref{1.3}. In the case that the heat conductivity $\kappa$ is the same order as the viscosity $\epsilon$, e.g. $\kappa=\epsilon$, the thermal layer has the same thickness, $\sqrt{\epsilon}$, as the Prandtl viscous layer (\cite{Prandtl} ). In this case, as $g_{x}=1$ in \eqref{1.3} one can deduce the problem \eqref{1.1} for the boundary layer profiles by using multi-scale analysis with $y=\frac{Y}{\sqrt{\epsilon}}$. The derivation of \eqref{1.1} from \eqref{1.3} can be found in \cite{WZ-0}.

In the study of small viscosity limit for viscous flow in a domain with non-slip boundary condition,  in 1904 Prandtl\cite{Prandtl} introduced the boundary layer theory, and derived that the boundary layer profiles satisfy a degenerate parabolic equation for the tangential velocity coupled with the divergence-free constraint, which is called the Prandtl equations. Since then, there have been many interesting results on the well-posedness of the Prandtl equations.  The first local-in-time well-posedness result of the 2-D Prandtl equations was obtained by Oleinik and her collaborators (\cite{Oleinik,Ole}), by using the Crocco transformation under the assumption that the tangential velocity is strictly increasing with respect to the normal variable of the boundary. Recently, similar results were also obtained in the Sobolev spaces in \cite{AWXY} and \cite{MW} by using the energy method. In addition to the monotonic class, there are some well-posedness results in either analytic spaces or Gevrey spaces, cf. \cite{LCS,SC1, SC2, ZZ,CWZ,GM,LY} and references therein. On the other hand, without the monotonicity condition there are also some interesting results on instability and blowup of solutions to the Prandtl equations. When the initial tangential velocity has a nondegenerate critical point, certain ill-posedness results were obtained in \cite{GD, GN, LiuY} for the 2-D Prandtl equations in the Sobolev framework. For non-monotonic initial data, E and Engquist (\cite{EE}) showed that smooth solutions to the 2-D Prandtl equations shall blowup in a finite time. Moreover, for a kind of special outer flow and non-monotonic initial data, Kukavica, Vicol and Wang (\cite{KVW}) showed that the analytic solution to the 2-D Prandtl equations blows up in a finite time as well. 

As many models in application have heat conduction, it is important to study the small viscosity and heat conductivity limit in viscous flow with heat conduction. In \cite{WZ}, the authors  obtained a local-in-time well-posedness result in analytic class for the problem \eqref{1.1} by using the Littlewood-Paley theory. In this paper, motivated by \cite{EE,KVW}, for a kind of data which are analytic in the tangential variable but do not satisfy the Oleinik monotonicity condition, we shall show that the analytic solution of \eqref{1.1} blows up in a finite time in the Sobolev space. Moreover, the blowup result shows that the buoyancy coming from the temperature difference in the flow may destabilize the thermal boundary layer.

The remainder of this paper is arranged as follows.
In Section 2, we present some assumptions on underlying Euler flow, initial and boundary datum, then state the main result of this paper. In Section 3, we construct a Lyapunov functional. The blowup of the Lyapunov functional will be
proved in Section 4.

\section{Statement of the main result}\setcounter{section}{2}\setcounter{equation}{0}

Assume that the outer Euler flow for the problem \eqref{1.1} satisfies the following conditions for all $0\le t\le T$, 
\begin{equation}\label{2.1}
U^{E}|_{x=0}=0,~
\partial_{x}\theta^{E}|_{x=0}\geq0,
\end{equation}
which implies 
\begin{equation}\label{2.1-1}
(\theta^{E}+\partial_{x}P)|_{x=0}=\theta_{\infty},~
\partial_{t}\partial_{x}P|_{x=0}=0,
\end{equation}
by using \eqref{1.2} and \eqref{1.2-1}. Moreover, assume that the initial data and boundary data given in \eqref{1.1} satisfy
\begin{equation}\label{2.2}
\begin{cases}
u_{0}|_{x=0}=0,~\partial_{x}u_{0}|_{x=0}\leq 0,~\partial_{x}\theta_{0}|_{x=0}\geq0, \\[2mm]
(\bar{\theta}+\partial_{x}P)|_{x=0}=\theta_{\infty},~\partial_{x}\bar{\theta}|_{x=0}\geq0, 
\end{cases}
\end{equation}
which implies
\begin{equation}\label{2.3}
(\theta_{0}+\partial_{x}P(0,x))|_{x=0}=\theta_{\infty}
\end{equation}
from the second equation given in \eqref{1.1} for the smooth solution.

The main result of this paper is
\begin{theorem}
Assume that the outer Euler state, the initial and boundary data of the problem \eqref{1.1} satisfy the assumption given in Theorem 2.1 in \cite{WZ}, such that the problem \eqref{1.1} has a unique local solution $(u,v, \theta)$ analytic in the $x-$variable,  and the initial data $(u_{0},\theta_{0})$ belongs to the space $H^{2}(\mathbb{R}^{+})$ with respect to the $y-$variable and satisfies the compatibility conditions of \eqref{1.1} up to order one. Then, under the conditions \eqref{2.1}-\eqref{2.3}, for the solution $(u,v,\theta)$ of the problem \eqref{1.1}, $\partial_xu$ blows up in a finite time when the initial data $(u_{0},\theta_{0})$ is properly large.
\end{theorem}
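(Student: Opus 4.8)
The plan is to follow the Lyapunov functional strategy of E--Engquist \cite{EE} and Kukavica--Vicol--Wang \cite{KVW}, adapted to the coupled velocity--temperature system \eqref{1.1}. First I would differentiate the equations with respect to $x$ and restrict to the symmetry axis $x=0$. Writing $a(t,y):=\partial_x u(t,0,y)$ and $b(t,y):=\partial_x\theta(t,0,y)$, the assumptions \eqref{2.1}--\eqref{2.3} are designed precisely so that $u|_{x=0}=0$, $v|_{x=0}=0$ for all time (by uniqueness and the odd/even structure in $x$), so that on the axis the transport terms $u\partial_x$ and $v\partial_y$ largely drop out or simplify. One obtains a closed system on $x=0$ of the schematic form
\begin{equation}\label{plan-1}
\partial_t a + a^2 = \partial_y^2 a - b, \qquad \partial_t b + a\, b = \partial_y^2 b + 2(\partial_y u)\partial_y a,
\end{equation}
where the last term needs care because $u$ itself appears; here the sign condition $\partial_x\theta^E|_{x=0}\ge 0$ together with \eqref{1.2-1} keeps $b\ge 0$, which is the key monotonicity that survives. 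The quadratic reaction term $-a^2$ in the $a$-equation is the engine of the blowup, exactly as in the inviscid Prandtl computation of \cite{EE}; the coupling term $-b$ only helps, since $b\ge 0$ makes the right-hand side of the $a$-equation more negative, which is the sense in which "buoyancy destabilizes the layer."

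Next I would introduce a weighted moment of $a$ — following \cite{KVW}, something like
\begin{equation}\label{plan-2}
\Lambda(t) := \int_0^{+\infty} a(t,y)\, \varphi(y)\, dy
\end{equation}
for a carefully chosen nonnegative weight $\varphi$ that decays as $y\to+\infty$ (to absorb the far-field behavior $a\to \partial_x U^E$) and vanishes suitably at $y=0$ (to kill the boundary terms produced by integrating $\partial_y^2 a$ by parts twice). Multiplying the $a$-equation in \eqref{plan-1} by $\varphi$ and integrating gives
\begin{equation}\label{plan-3}
\frac{d}{dt}\Lambda = \int_0^\infty a\,\varphi'' \, dy - \int_0^\infty a^2\,\varphi\, dy - \int_0^\infty b\,\varphi\, dy + (\text{boundary terms}),
\end{equation}
and the scheme is to choose $\varphi$ so that the linear term $\int a\varphi''$ is controlled by $\Lambda$ itself (e.g. $\varphi$ an eigenfunction-type weight with $\varphi''\le C\varphi$), the $-\int b\varphi\le 0$ term is simply discarded using $b\ge0$, and the dissipative quadratic term is bounded below via Cauchy--Schwarz by $c\,\Lambda^2$ (with $c=(\int\varphi)^{-1}$ if $\varphi$ is integrable, or a similar reverse inequality on the relevant weighted space). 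This yields the Riccati differential inequality $\Lambda' \le C\Lambda - c\,\Lambda^2$; if the initial data is large enough that $-\Lambda(0) > C/c$, i.e. $\Lambda(0)< -C/c$, then $\Lambda(t)\to -\infty$ in finite time $T_*$, forcing $\|\partial_x u(t)\|_{L^\infty}$ (in fact even a weighted $L^1$ norm) to blow up before $T_*$. Concretely the "properly large" hypothesis on $(u_0,\theta_0)$ translates into a smallness-from-below condition on $\Lambda(0)=\int_0^\infty \partial_x u_0(0,y)\varphi(y)\,dy$, which is consistent with $\partial_x u_0|_{x=0}\le 0$.

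The main obstacle I anticipate is handling the production term $2(\partial_y u)\partial_y a$ in the temperature equation and, more importantly, controlling the far-field/Euler contributions and the boundary terms at $y=0$ rigorously: unlike the pure Prandtl case, here the traces $U^E,\theta^E,P$ are dynamic and coupled through \eqref{1.2}--\eqref{1.2-1}, so one must verify that differentiating and restricting to $x=0$ genuinely produces the clean system \eqref{plan-1} with the right signs, using \eqref{2.1-1}. A second delicate point is that the argument uses only $b\ge 0$ (we throw away $-\int b\varphi$), so the temperature equation is needed only to propagate the sign $b(t,y)\ge 0$ from $\partial_x\theta_0|_{x=0}\ge 0$ and $\partial_x\bar\theta|_{x=0}\ge 0$ — this is a maximum-principle argument for the linear parabolic equation satisfied by $b$ on the axis, where the coefficient of the zeroth-order term is $a$ and the forcing $2(\partial_y u)\partial_y a$ must be shown to have a sign or be otherwise innocuous; if it does not have a clean sign, one instead needs an a priori bound and a bootstrap with the analyticity/local-existence theory of \cite{WZ} to ensure all these weighted integrals are finite and the manipulations legitimate on $[0,T_*)$. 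Finally one must connect blowup of $\Lambda$ to blowup of $\partial_x u$ in the stated Sobolev sense, which is immediate from $|\Lambda(t)|\le \|\varphi\|_{L^1}\|\partial_x u(t,0,\cdot)\|_{L^\infty}$ together with a trace/Sobolev embedding in $x$.
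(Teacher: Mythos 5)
Your overall strategy is the same as the paper's (restrict to the axis $x=0$, exploit the sign of $\partial_x\theta$ there via the maximum principle, build a weighted Lyapunov functional, and close a Riccati-type differential inequality), but there is one genuine gap that sits exactly where the real work of the proof lies. Your schematic system \eqref{plan-1} asserts that the transport terms ``largely drop out'' on the axis. The term $u\partial_x$ does (it produces the $a^2$), but $v\partial_y$ does not: since $v(t,0,y)=-\int_0^y \partial_x u(t,0,z)\,dz=-\partial_y^{-1}a$, differentiating $v\partial_y u$ in $x$ and restricting to $x=0$ leaves the nonlocal quadratic term $-(\partial_y^{-1}a)\,\partial_y a$ in the $a$-equation. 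This term is of the same order as $a^2$, has no sign, and cannot be discarded; after multiplying by the weight and integrating by parts it produces $\int a^2\varphi+\int(\partial_y^{-1}a)a\,\varphi'$, and the second integral competes directly with the good term $\int a^2\varphi$ you want to bound below by $c\Lambda^2$. The entire list of structural conditions the paper imposes on the weight (convexity/concavity on prescribed subintervals, the inequalities combining $\varphi'(0)$, $\varphi'(\delta)$, $\varphi'(\gamma)$, the ratio condition $(\varphi')^2/(\varphi\varphi'')\le\sigma<1$ at infinity, etc.) exists solely to absorb this nonlocal contribution, and the paper must first add a lift $\phi$ (a solution of a forced heat equation) so that $a+\phi\ge 0$, because the convexity estimates for $\partial_y^{-1}$ of the solution only work for a nonnegative quantity. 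As written, your \eqref{plan-3} and the resulting Riccati inequality $\Lambda'\le C\Lambda-c\Lambda^2$ do not follow.

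Two secondary points. First, the identity $u|_{x=0}\equiv 0$, $\tilde\theta|_{x=0}\equiv 0$ is not an automatic parity statement (the data are not assumed odd in $x$); the paper proves it by an $L^2$ energy/Gronwall argument for the restricted system, and this is precisely where the a priori bound \eqref{2.4} on $\partial_x u$, $\partial_y u$, $\partial_x\theta$ at $x=0$ enters the contradiction argument. Second, the production term you flag as problematic, $2(\partial_y u)\partial_y\partial_x u$ at $x=0$, in fact vanishes identically as a consequence of $u|_{x=0}\equiv 0$ (hence $\partial_y u|_{x=0}\equiv 0$), so the axis temperature equation is a clean linear transport--diffusion equation and the sign of $\partial_x\theta|_{x=0}$ propagates by the maximum principle with no further hypotheses --- this resolves your anticipated obstacle, but only once the vanishing of $u$ on the axis has actually been established.
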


\begin{remark}
Comparing with the blowup results obtained in \cite{EE} and \cite{KVW} for the classical Prandtl equations, we note that for the thermal boundary layer problem \eqref{1.1}, even when the pressure is favourable in the Prandtl sense (\cite{Prandtl, Ole, XZ}) which avoids the separation of the classical Prandtl layer, the thermal boundary layer profiles described by \eqref{1.1} may blow up in a finite time, which shows that the buoyancy in \eqref{1.1} may destabilize the thermal boundary layer. 
\end{remark}

We shall develop an idea similar to that given in \cite{KVW} to prove Theorem 2.1, by a contradiction argument. Assume that the problem \eqref{1.1} admits a solution $(u,v,\theta)$ on $[0,T]$, and there exists a constant $C_{T}$ depending only on $T$, such that
\begin{equation}\label{2.4}
\|\partial_{x}u(t,0,y)\|_{L^{\infty}([0,T]\times\mathbb{R}^{+})}
+\|\partial_{y}u(t,0,y)\|_{L^{\infty}([0,T]\times\mathbb{R}^{+})}
+\|\partial_{x}\theta(t,0,y)\|_{L^{\infty}([0,T]\times\mathbb{R}^{+})}\leq C_{T}.
\end{equation}
We shall construct a proper Lyapunov functional, and derive through a series estimates that this Lyapunov functional satisfies a differential inequality, from which one can obtain that $\|\partial_{x}u\|_{L^{\infty}([0,t]\times\mathbb{R}_{+}^{2})}$ blows up within the time interval $(0,T)$ by choosing initial data properly large.

In the following calculation, we shall use the notation $C$ to denote a generic positive constant independent of $T$, the initial boundary data and the underlying Euler flow, which may change from line to line.

\section{Construction of a Lyapunov functional}

Motivated by \cite{KVW}, in this section we construct a Lyapunov functional. The process of the construction can be divided into two steps. In the first step, we restrict the problem \eqref{1.1} on the $y-$axis by using the assumptions \eqref{2.1}-\eqref{2.3}, and transform the original problem into a one-space variable problem. In the second step, we construct a Lyapunov functional with a suitable weight function.

\subsection{Restriction of the thermal boundary layer problem on the $y-$axis}

Denote by $\tilde{\theta}(t,x,y)=\theta(t,x,y)-\theta_{\infty}+\partial_{x}P(t,0)$, and
\begin{equation*}
w(t,y)=u(t,0,y),~s(t,y)=\tilde{\theta}(t,0,y).
\end{equation*}

With the assumptions given \eqref{2.1}-\eqref{2.3}, by restricting the thermal boundary layer problem \eqref{1.1} on the $y-$axis, we deduce that $(w(t,y), s(t,y))$ satisfies the following problem in $\{t>0, y\in \R^+\}$,
\begin{equation}\label{3.1}
\begin{cases}
\partial_{t}w+w\partial_{x}u(t,0,y)+v(t,0,y)\partial_{y}w=\partial_{y}^{2}w-s,\\
\partial_{t}s+w\partial_{x}\theta(t,0,y)+v(t,0,y)\partial_{y}s=\partial_{y}^{2}s+(\partial_{y}w)^{2},\\
w|_{y=0}=0,~s|_{y=0}=0,~\lim\limits_{y\to+\infty}(w,s)=(0,0)\\
w|_{t=0}=0,~s|_{t=0}=0.
\end{cases}
\end{equation}

For the problem \eqref{3.1}, one can prove that
\begin{prop}\label{pro1}
Under the assumptions \eqref{2.1}-\eqref{2.4}, the problem \eqref{3.1} admits a unique trivial solution $w(t,y)=s(t,y)=0$ on [0,T].
\end{prop}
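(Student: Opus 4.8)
The plan is to show that the only solution of the linear-in-$(w,s)$ system \eqref{3.1} with zero initial and boundary data is the trivial one, by a straightforward energy estimate that exploits the fact that the coefficients $\partial_x u(t,0,y)$, $v(t,0,y)$ and $\partial_x\theta(t,0,y)$ are controlled on $[0,T]$. First I would observe that the boundedness assumption \eqref{2.4} gives uniform $L^\infty$ control of the zeroth-order coefficients $\partial_x u(t,0,\cdot)$ and $\partial_x\theta(t,0,\cdot)$, and that the transport coefficient $v(t,0,y)=-\int_0^y \partial_x u(t,0,z)\,dz$ satisfies $|v(t,0,y)|\le C_T\, y$ by \eqref{2.4}; this linear-in-$y$ growth is the one mild technical nuisance, and it is handled by the structure of the convection term after an integration by parts.

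The main step is to multiply the first equation of \eqref{3.1} by $w$ and the second by $s$, integrate over $y\in\R^+$, and add. The diffusion terms $\partial_y^2 w$, $\partial_y^2 s$ produce, after integration by parts using $w|_{y=0}=s|_{y=0}=0$ and the decay at infinity, the nonnegative dissipation $-\|\partial_y w\|_{L^2}^2-\|\partial_y s\|_{L^2}^2$, which one simply discards. The convection terms $\int v(t,0,y)\,\partial_y w\cdot w\,dy=\tfrac12\int v\,\partial_y(w^2)\,dy=-\tfrac12\int \partial_y v\cdot w^2\,dy=\tfrac12\int \partial_x u(t,0,y)\,w^2\,dy$ (and similarly for $s$) are absorbed into $C_T(\|w\|_{L^2}^2+\|s\|_{L^2}^2)$ using \eqref{2.4}; here it is essential that the apparently dangerous factor $v\sim y$ is differentiated away rather than estimated directly. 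The remaining coupling terms $-\int s\,w\,dy$, $\int w\,\partial_x\theta(t,0,y)\cdot s\,dy$ and the quadratic source $\int (\partial_y w)^2 s\,dy$ are the only ones that are not obviously linear; the first two are bounded by $C_T(\|w\|_{L^2}^2+\|s\|_{L^2}^2)$ by Cauchy–Schwarz and \eqref{2.4}.

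The genuinely delicate term is $\int (\partial_y w)^2 s\,dy$, since one has discarded the dissipation and a priori only controls $\partial_y w$ in $L^2$, not in $L^\infty$. The remedy is to keep, rather than discard, a fraction of the dissipation: one uses the local solution's regularity (the solution is analytic in $x$ and the data lie in $H^2$ with respect to $y$, so in particular $\partial_y w$, $\partial_y s$ are bounded on $[0,T]\times\R^+$ by the local existence theorem of \cite{WZ}) to bound $\int (\partial_y w)^2 s\,dy \le \|\partial_y w\|_{L^\infty}\,\|\partial_y w\|_{L^2}\,\|s\|_{L^2} \le \tfrac12\|\partial_y w\|_{L^2}^2 + C\|s\|_{L^2}^2$, where the first piece is absorbed by half of the retained dissipation and $C$ depends on the (finite, by local existence) quantity $\|\partial_y w\|_{L^\infty([0,T]\times\R^+)}$. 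Putting everything together yields the differential inequality $\tfrac{d}{dt}\big(\|w\|_{L^2}^2+\|s\|_{L^2}^2\big)\le C\big(\|w\|_{L^2}^2+\|s\|_{L^2}^2\big)$ on $[0,T]$; since $\|w(0,\cdot)\|_{L^2}=\|s(0,\cdot)\|_{L^2}=0$, Grönwall's inequality forces $w\equiv s\equiv 0$ on $[0,T]$, and uniqueness follows because \eqref{3.1} is a linear system in $(w,s)$ with these coefficients. I expect the only real obstacle to be the bookkeeping for the source term $\int(\partial_yw)^2 s\,dy$ and making precise which norm of the local solution one is invoking; everything else is a routine parabolic energy estimate.
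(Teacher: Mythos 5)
Your proposal is correct and follows essentially the same route as the paper: multiply by $w$ and $s$, integrate by parts using the boundary conditions and the divergence-free constraint to convert the convection terms into $\tfrac12\int \partial_x u(t,0,y)\,w^2\,dy$ (resp.\ for $s$), absorb the cubic term $\int s(\partial_y w)^2\,dy$ into the retained dissipation via the $L^\infty$ bound on $\partial_y w=\partial_y u(t,0,\cdot)$, and conclude by Gr\"onwall. The only cosmetic difference is that you invoke the local existence theory to bound $\|\partial_y w\|_{L^\infty}$, whereas this bound is already contained verbatim in the standing assumption \eqref{2.4}, which is what the paper uses.
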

\begin{proof}
This proposition can be proved by using the energy method. Multiplying the first and second equations given in \eqref{3.1} by $w(t,y)$ and $s(t,y)$ respectively,  and integrating the resulting equations over $\mathbb{R}^{+}$ with respect to $y$, it follows that
\begin{equation}\label{3.2}
\begin{array}{lll}
\frac{1}{2}\frac{d}{dt}  \hspace{-.15in}& (& \hspace{-.15in}\|w(t)\|_{L^{2}(\mathbb{R}^{+})}^{2}
+ \|s(t)\|_{L^{2}(\mathbb{R}^{+})}^{2})
=\int_{0}^{\infty}w\partial_{y}^{2}wdy
-\int_{0}^{\infty}swdy+\int_{0}^{\infty}s\partial_{y}^{2}sdy+\int_{0}^{\infty}s(\partial_{y}w)^{2}dy\\[2mm]
& & \hspace{-.1in} -\int_{0}^{\infty}\partial_{x}u(t,0,y)w^{2}dy-\int_{0}^{\infty}v(t,0,y)w\partial_{y}wdy -\int_{0}^{\infty}\partial_{x}\theta(t,0,y)swdy-\int_{0}^{\infty}v(t,0,y)s\partial_{y}sdy\\[2mm]
& \le & \hspace{-.1in} (1+ \|\partial_{x}\theta(t,0,y)\|_{L^{\infty}([0,T]\times\mathbb{R}^{+})})(\|w(t)\|_{L^{2}(\mathbb{R}^{+})}^{2}
+ \|s(t)\|_{L^{2}(\mathbb{R}^{+})}^{2})-\int_{0}^{\infty}((\partial_{y}w)^{2}+(\partial_{y}s)^{2})dy\\[2mm]

& & \hspace{-.1in} + \|\partial_{x} u(t,0,y)\|_{L^{\infty}([0,T]\times\mathbb{R}^{+})}(\frac{3}{2}\|w(t)\|_{L^{2}(\mathbb{R}^{+})}^{2}
+ \frac{1}{2}\|s(t)\|_{L^{2}(\mathbb{R}^{+})}^{2})+\int_{0}^{\infty}s(\partial_{y}w)^{2}dy.
\end{array}
\end{equation}
by using integration by parts, the boundary conditions given in \eqref{3.1} and the divergence-free constraint given in \eqref{1.1}.

On the other hand, we have
\begin{align*}
\int_{0}^{\infty}s(\partial_{y}w)^{2}dy
\leq& \|\partial_{y}u(t,0,y)\|_{L^{\infty}([0,T]\times\mathbb{R}^{+})}\int_{0}^{\infty}|s\partial_{y}w|dy\\
\leq& \int_{0}^{\infty}(\partial_{y}w)^{2}dy
+\|\partial_{y}u(t,0,y)\|_{L^{\infty}([0,T]\times\mathbb{R}^{+})}^{2}\|s(t)\|_{L^{2}(\mathbb{R}^{+})}^{2}.
\end{align*}

Thus, from \eqref{3.2} we get that there is a constant $C>0$ such that 
\begin{equation}\label{3.2-1}
\frac{1}{2}\frac{d}{dt} (\|w(t)\|_{L^{2}(\mathbb{R}^{+})}^{2}
+ \|s(t)\|_{L^{2}(\mathbb{R}^{+})}^{2})
\le  C\left(C_{T}+C_{T}^{2}+1\right)\left(\|w\|_{L^{2}(\mathbb{R}^{+})}^{2}+\|s\|_{L^{2}(\mathbb{R}^{+})}^{2}\right),
\end{equation}
for the constant $C_T$ given in \eqref{2.4}.

By using the Gronwall inequality in \eqref{3.2-1}, it follows immediately,
\begin{equation*}
\|w(t)\|_{L^{2}(\mathbb{R}^{+})}^{2}+\|s(t)\|_{L^{2}(\mathbb{R}^{+})}^{2}=0,
\end{equation*}
for any $t\in[0,T]$.
\end{proof}

From Proposition 3.1, we know that $u(t,x,y)$ and $\tilde{\theta}(t,x,y)$ vanish at $x=0$, so we can write them in the following form
\begin{equation}\label{3.3}
u(t,x,y)=-x\bar{w}(t,x,y),~\tilde{\theta}(t,x,y)=-x\bar{s}(t,x,y).
\end{equation}
Moreover, from the assumption $U^{E}|_{x=0}=0$, one has
\begin{equation}\label{3.4}
U^{E}(t,x)=-x\bar{U}^{E}(t,x).
\end{equation}

Substituting \eqref{3.3} and \eqref{3.4} into $\eqref{1.1}_{2}$ and $\eqref{1.1}_{3}$, we get
\begin{equation}\label{3.5}
\begin{cases}
-x\partial_{t}\bar{w}+x\bar{w}(\bar{w}+x\partial_{x}\bar{w})-v(x\partial_{y}\bar{w})
=-x\partial_{y}^{2}\bar{w}+x\bar{P}(t,x)+x\bar{s},\\
-x\partial_{t}\bar{s}+x\bar{w}(\bar{s}+x\partial_{x}\bar{s})-v(x\partial_{y}\bar{s})
=-x\partial_{y}^{2}\bar{s}+(x\partial_{y}\bar{w})^{2},
\end{cases}
\end{equation}
with
$$\bar{P}(t,x)=-\frac{\partial_{x}P(t,x)-\partial_{x}P(t,0)}{x}.$$

Let $\tilde{w}(t,y)=\bar{w}(t,0,y)$ and $ \tilde{s}(t,y)=\bar{s}(t,0,y)$. Since $\{u, v,\theta\}$ is a solution of \eqref{1.1} which is analytic with respect to $x$, from \eqref{3.5} and \eqref{1.1} we deduce  that $(\tilde{w}, \tilde{s})$ satisfies the following problem
\begin{equation}\label{3.6}
\begin{cases}
\partial_{t}\tilde{w}-\tilde{w}^{2}+v(t,0,y)\partial_{y}\tilde{w}=\partial_{y}^{2}\tilde{w}-\bar{P}(t,0)-\tilde{s},\\
\partial_{t}\tilde{s}-\tilde{w}\tilde{s}+v(t,0,y)\partial_{y}\tilde{s}=\partial_{y}^{2}\tilde{s},\\
\tilde{w}|_{y=0}=0,~
\tilde{s}|_{y=0}=-\partial_{x}\bar{\theta}(t,0),~\lim\limits_{y\to+\infty}(\tilde{w}, \tilde{s})=(\bar{U}^{E}(t,0),-\partial_{x}\theta^{E}(t,0)),\\
\tilde{w}|_{t=0}=\tilde{w}_{0},~\tilde{s}|_{t=0}=\tilde{s}_{0}.
\end{cases}
\end{equation}

To get the conclusion of Theorem 2.1, it is sufficient to prove,

\begin{theorem}
Under the assumption \eqref{2.4},  the smooth solution to the problem \eqref{3.6} blows up within the time interval $(0,T)$ for a large class of initial data $(\tilde{w}_{0},\tilde{s}_{0})$.
\end{theorem}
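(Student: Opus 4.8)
The plan is to prove Theorem 3.2 by a contradiction argument following the Lyapunov-functional strategy of \cite{KVW}, adapted to the coupled system \eqref{3.6}. Assuming the solution $(\tilde w,\tilde s)$ exists smoothly on $[0,T]$ together with the bound \eqref{2.4}, I would first observe that \eqref{2.4} controls the coefficients appearing in \eqref{3.6}: from $u(t,0,y)=w(t,y)=0$ (Proposition \ref{pro1}) and the divergence-free constraint one gets $v(t,0,y)=-\int_0^y \partial_x u(t,0,z)\,dz$, so $\|v(t,0,\cdot)\|$ is controlled on compact $y$-intervals by $C_T$; similarly $\bar P(t,0)=-\partial_x^2 P(t,0)/2$ and $\partial_x\bar\theta(t,0)$, $\partial_x\theta^E(t,0)$ are bounded by $C_T$ (or by the prescribed Euler data). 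The key structural point, inherited from \cite{EE,KVW}, is that the first equation in \eqref{3.6} contains the \emph{favorable} nonlinearity $+\tilde w^2$, which drives blowup of $\tilde w$ (hence of $\partial_x u = -\bar w \sim -\tilde w$ near $x=0$) provided the coupling term $-\tilde s$ and the transport term $v\partial_y\tilde w$ can be absorbed.

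Next I would introduce a weight function $\varphi(y)\ge 0$ (compactly supported or rapidly decaying, vanishing to appropriate order at $y=0$ to kill boundary terms, as in \cite{KVW}) and define a Lyapunov functional of the form
\begin{equation*}
L(t)=\int_0^\infty \tilde w(t,y)\,\varphi(y)\,dy + \text{(correction terms involving } \tilde s \text{)},
\end{equation*}
where the correction is chosen so that the bad contributions of $\tilde s$ and of the heat equation for $\tilde s$ cancel against the main term. Differentiating $L$ in $t$, substituting \eqref{3.6}, and integrating by parts, the diffusion $\partial_y^2\tilde w$ produces $\int \tilde w\,\varphi''$, the transport term produces $\int \tilde w\,\partial_y(v\varphi)$ which is controlled by $C_T\|\varphi\|$-type constants, and the crucial term $\int \tilde w^2\varphi$ is bounded below by $L(t)^2/\|\varphi\|_{L^1}$ via Cauchy--Schwarz. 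The forcing $-\bar P(t,0)\int\varphi$ and the temperature coupling $-\int\tilde s\,\varphi$ are the terms that must be handled by the $\tilde s$-correction together with the second equation (which, being a linear drift-diffusion equation for $\tilde s$ with bounded drift coefficient $\tilde w$ — itself controlled by $C_T$ as long as no blowup has occurred — keeps $\tilde s$ bounded in terms of the data and $C_T$). The outcome should be a Riccati-type differential inequality $L'(t)\ge \kappa\, L(t)^2 - C(C_T,\text{data})$, valid on $[0,T]$.

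Finally, I would close the argument: if the initial data $(\tilde w_0,\tilde s_0)$ is chosen so that $L(0)$ is large enough — specifically $L(0) > \sqrt{C/\kappa}$ and $L(0)$ large relative to the explicit blowup-time threshold — then the Riccati inequality forces $L(t)\to\infty$ before time $T$, contradicting the assumed smoothness/bound on $[0,T]$. Since $L$ is controlled by $\sup_y|\tilde w|$ (and the correction by $\sup_y|\tilde s|$), its blowup forces $\|\partial_x u\|_{L^\infty([0,t]\times\mathbb R_+^2)}$ to blow up inside $(0,T)$, which is exactly the conclusion feeding back into Theorem 2.1.

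The main obstacle I anticipate is the treatment of the temperature coupling term $-\int\tilde s\,\varphi$ in $L'(t)$: unlike the scalar Prandtl case in \cite{EE,KVW}, here one must simultaneously control $\tilde s$ via its own equation and choose the weight $\varphi$ and the correction term so that this coupling does not overwhelm the quadratic gain. Concretely, one needs that the buoyancy-induced sign structure — encoded in the assumptions \eqref{2.1}--\eqref{2.3} (in particular $\partial_x u_0|_{x=0}\le 0$, i.e. $\tilde w_0\ge 0$, and $\partial_x\theta_0|_{x=0}\ge0$, i.e. $\tilde s_0\le0$) — is propagated by \eqref{3.6}, so that $-\tilde s\ge 0$ actually \emph{helps} (reinforces) the growth of $\tilde w$ rather than fighting it. Establishing this sign propagation (a maximum-principle argument for the $\tilde s$-equation, then for the $\tilde w$-equation) and verifying it survives the weighted integration is the delicate part; once that monotone/sign structure is in hand, the Riccati inequality and the large-data conclusion follow in the standard way.
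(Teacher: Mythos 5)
Your overall strategy --- a weighted functional built from $\tilde w$, a Riccati-type differential inequality, and a maximum principle giving $\tilde s\le 0$ so that the buoyancy term reinforces the growth --- is indeed the skeleton of the paper's argument (Lemma \ref{lemma2} and Proposition \ref{pro2}). But the two steps you treat as routine are exactly where the paper has to work, and as you have set them up they would fail. The sign of $\tilde w$ does not propagate on its own: in \eqref{3.6} the far-field value is $\bar U^{E}(t,0)$, which may be negative, and the forcing contains $-\bar P(t,0)$, which may also be negative, so a maximum-principle argument applied directly to the $\tilde w$-equation does not give $\tilde w\ge 0$; without a sign you can neither discard the forcing nor run Cauchy--Schwarz to get $\int\tilde w^{2}\varphi\ge L^{2}/\|\varphi\|_{L^{1}}$. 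The paper's fix is the lift: it solves the auxiliary heat problem \eqref{3.7} with data built from the constants $C_E,C_P$ of \eqref{3.7-0}, works with $a=\tilde w+\phi$, and proves $a\ge0$ (Lemma \ref{lemma3}) using that the new forcing $G$ in \eqref{3.10} satisfies $G\ge\tfrac12\phi^{2}\ge0$ --- which needs both the lift and $\tilde s\le0$. Once $G\ge 0$, no ``correction term involving $\tilde s$'' is needed in the functional at all; $G$ is simply dropped, and only the sign of $\tilde s$, not any quantitative bound on it, enters.

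The more serious gap is your treatment of the transport term. Since $u(t,0,\cdot)=0$, one has $v(t,0,y)=\partial_y^{-1}\tilde w(t,y)$, so $v\partial_y\tilde w$ is itself quadratic in the unknown; estimating it (or any other term) by $C_T$ and arriving at $L'\ge\kappa L^{2}-C(C_T,\mathrm{data})$ makes the contradiction argument circular, because the resulting blowup threshold for $L(0)$ depends on $C_T$, i.e.\ on the a priori size of the very solution whose existence is being contradicted; one cannot choose initial data ``large relative to $C(C_T)$''. The paper instead keeps the exact structure $a^{2}-(\partial_y^{-1}a)\partial_ya$, uses the identity $\partial_y^{2}\left((\partial_y^{-1}a)^{2}\right)=2\left(a^{2}+(\partial_y^{-1}a)\partial_ya\right)$, and absorbs all the resulting boundary and sign-indefinite contributions into the quadratic gain by means of the long list of conditions \eqref{3.12} imposed on the weight $\mathcal Y$. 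The outcome \eqref{4.0} involves only $C_E$, $C_P$ and the weight parameters --- no $C_T$ and no additive constant --- which is precisely what makes the large-data condition on $\mathcal G(0)$ checkable. Designing $\mathcal Y$ so that these cancellations occur is the real content of the proof and is absent from your sketch.
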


\subsection{A lift to the target function}

As the solution $\tilde{w}$ of \eqref{3.6} may be negative, in order to construct a Lyapunov functional,  as in  \cite{KVW}, we add a lift function to $\tilde{w}$ by using the solution of the initial-boundary value problem of a nonhomogenous heat equation.

Let 
\begin{equation}\label{3.7-0}
C_{E}=-\!\!\inf\limits_{t\in[0,T]}\!\!\min\{\bar{U}^{E}(t,0),0\}, \qquad 
C_{P}=\!\!\sup\limits_{t\in[0,T]}\!\!\max\{\bar{P}(t,0),0\}.
\end{equation}
Consider the following problem in $\{t>0, y>0\}$, 
\begin{equation}\label{3.7}
\begin{cases}
\partial_{t}\phi-\partial_{y}^{2}\phi=C_{P}\\
\phi|_{y=0}=0,~\lim\limits_{y\to+\infty}\phi=C_{E}+C_{P}t,\\
\phi|_{t=0}=C_{E}{\rm Erf}\left(\frac{y}{2}\right)
\end{cases}
\end{equation}
where ${\rm Erf}(z)=\frac{2}{\sqrt{\pi}}\int_{0}^{z}e^{-s^{2}}ds$. It is easy to know that the solution of \eqref{3.7} is given as
\begin{align}\label{3.8}
\phi(t,y)=C_{E}{\rm Erf}\left(\frac{y}{\sqrt{4(t+1)}}\right)
+C_{P}t\left[\frac{y^{2}}{2t}\left({\rm Erf}\left(\frac{y}{\sqrt{4t}}\right)-1\right)
+{\rm Erf}\left(\frac{y}{\sqrt{4t}}\right)+\frac{y}{\sqrt{\pi t}}e^{-\frac{y^{2}}{4t}}\right].
\end{align}
As shown in \cite{KVW}, we have

\begin{lemma}\label{lemma1}
Let $\phi$ be the solution to the problem \eqref{3.7}. Then, for all $(t, y)\in \R^+\times \R^+$,  we have: 
{\rm(i)} $\partial_{y}\phi\geq0$, {\rm (ii)} $\phi\geq0$, {\rm (iii)} $\phi\leq C_{E}+C_{P}t$, {\rm (iv)} $\partial_{y}^{2}\phi\leq0$.
\end{lemma}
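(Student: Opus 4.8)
The statement to prove is \textbf{Lemma \ref{lemma1}}: the solution $\phi$ of \eqref{3.7}, given explicitly by \eqref{3.8}, satisfies $\partial_y\phi\ge 0$, $\phi\ge 0$, $\phi\le C_E+C_Pt$, and $\partial_y^2\phi\le 0$ for all $(t,y)\in\mathbb{R}^+\times\mathbb{R}^+$.

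\medskip

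The plan is to split $\phi$ into its two natural pieces, $\phi=\phi_1+\phi_2$, where $\phi_1(t,y)=C_E\,{\rm Erf}\bigl(y/\sqrt{4(t+1)}\bigr)$ solves the homogeneous heat equation with the given initial profile, and $\phi_2$ is the remaining term carrying the source $C_P$. Since $C_E\ge 0$ and $C_P\ge 0$ by their definition in \eqref{3.7-0}, each piece can be handled with sign only. For $\phi_1$: one computes $\partial_y\phi_1=\frac{C_E}{\sqrt{\pi(t+1)}}e^{-y^2/(4(t+1))}\ge 0$, whence $\phi_1\ge \phi_1|_{y=0}=0$; also $\phi_1\le C_E$ since ${\rm Erf}\le 1$; and $\partial_y^2\phi_1=-\frac{C_E y}{2\sqrt{\pi}(t+1)^{3/2}}e^{-y^2/(4(t+1))}\le 0$ for $y\ge 0$. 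For $\phi_2(t,y)=C_Pt\bigl[\tfrac{y^2}{2t}({\rm Erf}(y/\sqrt{4t})-1)+{\rm Erf}(y/\sqrt{4t})+\tfrac{y}{\sqrt{\pi t}}e^{-y^2/(4t)}\bigr]$, I introduce the self-similar variable $\eta=y/\sqrt{4t}$ and write $\phi_2=C_Pt\,G(\eta)$ with $G(\eta)=2\eta^2({\rm Erf}(\eta)-1)+{\rm Erf}(\eta)+\tfrac{2\eta}{\sqrt\pi}e^{-\eta^2}$; then $\partial_y\phi_2=C_Pt\,G'(\eta)\cdot\tfrac{1}{\sqrt{4t}}$ and $\partial_y^2\phi_2=C_Pt\,G''(\eta)\cdot\tfrac{1}{4t}$, so all four claims for $\phi_2$ reduce to showing $G\ge 0$, $G'\ge 0$, $G''\le 0$ on $[0,\infty)$, together with the endpoint bound $\lim_{\eta\to\infty}G(\eta)=1$ (which gives $\phi_2\le C_Pt$).

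\medskip

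The monotonicity and concavity of $G$ are the computational heart. Using ${\rm Erf}'(\eta)=\tfrac{2}{\sqrt\pi}e^{-\eta^2}$, differentiating gives (after cancellation) $G'(\eta)=4\eta\bigl({\rm Erf}(\eta)-1\bigr)+\tfrac{8}{\sqrt\pi}e^{-\eta^2}$ — wait, the cross terms cancel cleanly, and in fact one finds $G'(\eta)=4\eta({\rm Erf}(\eta)-1)+\tfrac{8\eta}{\sqrt\pi}e^{-\eta^2}\cdot(\cdots)$; the precise bookkeeping is routine, and the point is that $G'$ again collapses to a simple expression whose sign is transparent, and $G''(\eta)$ turns out to be a negative multiple of $\eta e^{-\eta^2}$ plus a term proportional to ${\rm Erf}(\eta)-1\le 0$, hence $G''\le 0$. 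Concavity plus $G(0)=0$ and $\lim_{\eta\to\infty}G=1$ then force $G'\ge 0$ (a decreasing nonnegative-then-would-be-negative derivative that integrates to a finite positive limit must stay nonnegative) and $0\le G\le 1$. Adding the two pieces: $\partial_y\phi=\partial_y\phi_1+\partial_y\phi_2\ge 0$, $\phi\ge 0$, $\phi\le C_E+C_Pt$, and $\partial_y^2\phi\le 0$, which are exactly (i)--(iv).

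\medskip

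The main obstacle I anticipate is purely the bookkeeping in differentiating \eqref{3.8} twice and verifying the sign of $G''$ — the formula for $\phi_2$ has three terms, the first of which is a product $\tfrac{y^2}{2t}({\rm Erf}(\cdot)-1)$ whose derivatives generate several cancelling contributions, so care is needed to not mishandle the self-similar scaling $\eta=y/\sqrt{4t}$ (note the two error functions in \eqref{3.8} have arguments $y/\sqrt{4(t+1)}$ and $y/\sqrt{4t}$ respectively, so only $\phi_2$ is genuinely self-similar). An alternative that avoids all of this is the maximum/comparison principle: $\phi$ solves a heat equation with nonnegative source $C_P\ge 0$ and nonnegative boundary/initial data, so $\phi\ge 0$ follows at once; $\phi\le C_E+C_Pt$ follows by comparison with the spatially constant supersolution $C_E+C_Pt$; $\partial_y\phi\ge 0$ follows because $\partial_y\phi$ solves the heat equation with zero source, its trace at $y=0$ is $\partial_y\phi|_{y=0}\ge 0$ (computable from \eqref{3.8}, or from the fact that $\phi$ is increasing in $y$ by reflection/odd-extension arguments) and its initial datum $C_E\,\tfrac{1}{\sqrt\pi}e^{-y^2/4}\ge 0$; and $\partial_y^2\phi\le 0$ follows similarly since $\partial_y^2\phi$ solves the heat equation with zero source, nonpositive initial datum, and nonpositive trace at $y=0$ (where $\partial_y^2\phi|_{y=0}=\partial_t\phi|_{y=0}-C_P=-C_P\le 0$). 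I would present the explicit-formula computation since \eqref{3.8} is already on the table, but flag the comparison-principle route as the conceptual shortcut.
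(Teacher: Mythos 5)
Your proposal is correct, but your primary route differs from the paper's: the paper never touches the explicit formula \eqref{3.8} beyond reading off boundary/initial traces, and instead proves all four statements by the maximum principle, exactly along the lines of the ``alternative'' you flag at the end --- $\partial_y\phi$ solves the heat equation with $\partial_y\phi|_{y=0}>0$, $\partial_y\phi|_{t=0}\ge 0$ and decay at infinity, giving (i); (ii) and (iii) then follow from monotonicity in $y$ together with $\phi|_{y=0}=0$ and $\phi|_{y\to\infty}=C_E+C_Pt$; and $\partial_y^2\phi$ solves the heat equation with $\partial_y^2\phi|_{y=0}=-C_P\le 0$ and nonpositive initial datum, giving (iv). Your self-similar computation does go through, and in fact more cleanly than your hedged bookkeeping suggests: with $G(\eta)=2\eta^2({\rm Erf}(\eta)-1)+{\rm Erf}(\eta)+\tfrac{2\eta}{\sqrt{\pi}}e^{-\eta^2}$ one gets $G'(\eta)=4\eta({\rm Erf}(\eta)-1)+\tfrac{4}{\sqrt{\pi}}e^{-\eta^2}$ (your written coefficient $\tfrac{8}{\sqrt{\pi}}$ is off, though you do not use this formula) and $G''(\eta)=4({\rm Erf}(\eta)-1)\le 0$ exactly, the $\eta e^{-\eta^2}$ terms cancelling; your deduction of $G'\ge 0$ and $0\le G\le 1$ from concavity plus $G(0)=0$, $G(\infty)=1$ is then sound, and $G'\ge 0$ can also be read off directly from the standard bound ${\rm erfc}(\eta)\le e^{-\eta^2}/(\eta\sqrt{\pi})$. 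The trade-off is the usual one: your computation is self-contained and quantitative but fragile to algebra slips, while the paper's comparison-principle argument is shorter, needs only the signs of $C_E$, $C_P$ and the traces, and would survive any perturbation of the lift function; since you identify and correctly execute both, the proof is acceptable either way.
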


\begin{proof}
(i) From \eqref{3.7} and \eqref{3.8}, it is easy to know that 
\begin{equation*}
\partial_{y}\phi|_{y=0}>0, \quad  \partial_{y}\phi|_{t=0}\geq 0, \quad 
\lim\limits_{y\to+\infty}\partial_{y}\phi=0,
\end{equation*}
and
$\partial_{y}\phi$ satisfies the heat equation, so by using the minimum principle we deduce $\partial_{y}\phi\geq0$
for all $t\ge 0, y\ge 0$.

(ii) This is a direct corollary of (i) and $\phi|_{y=0}=0$.

(iii) Since $\phi|_{y=0}=0,~\phi|_{y\to\infty}=C_{E}+C_{P}t$, by using (i) we know
\begin{equation*}
\phi(t,y)\leq C_{E}+C_{P}t, \quad \forall (t,y)\in \R_+\times\R_+.
\end{equation*}

(iv) From the problem \eqref{3.7} we know that
\begin{align*}
\partial_{y}^{2}\phi|_{y=0}=-C_{P}\leq 0,
\quad \partial_{y}^{2}\phi|_{t=0}\leq 0, \quad \lim\limits_{y\to+\infty}\partial_{y}^{2}\phi=0.
\end{align*}
Since $\partial_{y}^{2}\phi$ is a solution of the heat equation, 
by using the maximum principle, we deduce that $\partial_{y}^{2}\phi\leq0$.
\end{proof}

Now let $a(t,y)=\tilde{w}(t,y)+\phi(t,y)$. Using $\eqref{1.1}_{1}$, we have
\begin{equation}\label{3.9}
v(t,x,y)=\int_{0}^{y}(\bar{w}(t,x,z)+x\partial_{x}\bar{w}(t,x,z))dz
=\partial_{y}^{-1}(\bar{w}(t,x,y)+x\partial_{x}\bar{w}(t,x,y)).
\end{equation}
From \eqref{3.6}, we know that $a(t,y)$ satisfies the following problem, 
\begin{equation}\label{3.10}
\begin{cases}
\partial_{t}a-\partial_{y}^{2}a=a^{2}-\partial_{y}^{-1}a\partial_{y}a+L[a]+G,\\
a|_{y=0}=0,~\lim\limits_{y\to\infty}a=C_{P}t+C_{E}+\bar{U}^{E}(t,0),\\
a|_{t=0}=a_{0},
\end{cases}
\end{equation}
where
\begin{equation}\label{3.10-1}
L[a]=-2a\phi+\partial_{y}^{-1}\phi \partial_{y}a+\partial_{y}^{-1}a \partial_{y}\phi,
\quad 
G=\phi^{2}-\partial_{y}^{-1}\phi\partial_{y}\phi+C_{P}-\bar{P}(t,0)-\tilde{s}.
\end{equation}
and
\begin{equation*}
a_{0}=\tilde{w}_{0}+C_{E}{\rm Erf}\left(\frac{y}{2}\right).
\end{equation*}

We will show that $a(t,y)$ is non-negative. To this end, we first have the following result.
\begin{lemma}\label{lemma2}
Under the condition \eqref{2.3}, if $(\tilde{w},\tilde{s})$ is a solution to the problem \eqref{3.6}, then $\tilde{s}(t,y)\leq0$ on $[0,T]\times\mathbb{R}^{+}$.
\end{lemma}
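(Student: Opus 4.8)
The plan is to establish the sign condition $\tilde s \le 0$ by a maximum-principle argument applied to the second equation of \eqref{3.6}. Observe that $\tilde s$ satisfies a linear parabolic equation
\[
\partial_{t}\tilde{s} - \partial_{y}^{2}\tilde{s} + v(t,0,y)\partial_{y}\tilde{s} - \tilde{w}\,\tilde{s} = 0,
\]
where the coefficients $v(t,0,y)$ and $\tilde w(t,y)$ are, by hypothesis, smooth functions coming from the assumed solution $(u,v,\theta)$ of \eqref{1.1}; in particular $\tilde w$ is bounded on $[0,T]\times\mathbb{R}^{+}$ under \eqref{2.4} (recalling $\tilde w(t,y)=\bar w(t,0,y)$ and $u=-x\bar w$, so $\tilde w = -\partial_x u(t,0,y)$). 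The zeroth-order coefficient $-\tilde w$ need not have a favorable sign, so I would first remove it by the standard substitution $\tilde s(t,y) = e^{\lambda t} z(t,y)$ with $\lambda$ chosen larger than $\sup_{[0,T]\times\mathbb{R}^{+}}|\tilde w|$; then $z$ satisfies $\partial_t z - \partial_y^2 z + v\,\partial_y z + (\lambda - \tilde w) z = 0$ with a nonnegative zeroth-order coefficient, which is the setting in which the weak maximum principle gives a sign.

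Next I would check the sign of the initial and boundary data for $z$ (equivalently for $\tilde s$). At $y=0$ we have $\tilde s|_{y=0} = -\partial_x\bar\theta(t,0)$; since $\tilde\theta = \theta - \theta_\infty + \partial_x P(t,0)$ and $\theta|_{y=0}=\bar\theta$, this boundary value equals $-\partial_x\bar\theta|_{x=0}$, which is $\le 0$ by the assumption $\partial_x\bar\theta|_{x=0}\ge 0$ in \eqref{2.2}. As $y\to+\infty$ we have $\tilde s \to -\partial_x\theta^{E}(t,0) = -\partial_x\theta^{E}|_{x=0} \le 0$ by \eqref{2.1}. For the initial data, $\tilde s_0(y) = \bar s_0(0,y) = -\partial_x(\theta_0 - \theta_\infty + \partial_x P(0,0))|_{x=0}\cdot(\text{coefficient extraction}) = -\partial_x\theta_0|_{x=0}$, which is $\le 0$ by the assumption $\partial_x\theta_0|_{x=0}\ge 0$ in \eqref{2.2}. (Here I am using that $\tilde\theta|_{x=0}=0$, which holds by Proposition \ref{pro1}, so that $\tilde\theta = -x\bar s$ and the coefficient $\bar s|_{x=0}$ is exactly $-\partial_x\tilde\theta|_{x=0} = -\partial_x\theta|_{x=0}$.)

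With a nonnegative zeroth-order coefficient, nonpositive initial data, and nonpositive boundary data at both $y=0$ and $y=+\infty$, the weak maximum principle for the parabolic operator $\partial_t - \partial_y^2 + v\partial_y + (\lambda-\tilde w)$ on the strip $[0,T]\times\mathbb{R}^{+}$ yields $z \le 0$, hence $\tilde s = e^{\lambda t} z \le 0$ on $[0,T]\times\mathbb{R}^{+}$, as claimed. The main technical obstacle is justifying the maximum principle on the unbounded domain $\mathbb{R}^{+}$ in $y$: one must either impose/verify a growth condition on $\tilde s$ at spatial infinity (it has a finite limit, which suffices) or localize by comparing with a suitable barrier such as $\varepsilon(1+y^2)e^{Kt}$ and let $\varepsilon\to 0$, using the smoothness and boundedness of the coefficients to control the auxiliary terms uniformly on $[0,T]$. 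Once this is handled, the argument is a routine application of the comparison principle.
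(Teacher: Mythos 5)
Your proposal is correct and follows essentially the same route as the paper: the paper sets $\zeta=\tilde{s}e^{-C_{T}t}$ (your $z$ with $\lambda=C_{T}$, using \eqref{2.4} to bound $\tilde{w}=-\partial_{x}u(t,0,y)$ so that the zeroth-order coefficient $C_{T}-\tilde{w}$ is nonnegative) and then invokes the weak maximum principle with the sign conditions \eqref{2.1}--\eqref{2.3} on the initial and boundary data. You merely spell out the data checks and the unbounded-domain justification that the paper leaves implicit.
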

\begin{proof}
This fact can be proved by the classical maximum principle. Let $\zeta(t,y)=\tilde{s}(t,y)e^{-C_{T}t}$.  From the second equation of \eqref{3.6} we know that $\zeta(t,y)$ satisfies
\begin{align*}
\partial_{t}\zeta+(C_{T}-\tilde{w})\zeta+v(t,0,y)\partial_{y}\zeta=\partial^2_{y}\zeta.
\end{align*}
Using the maximum principle in the above equation, with \eqref{2.1}-\eqref{2.4} we know that $\zeta(t,y)\leq 0$, which gives $\tilde{s}(t,y)\leq 0$.
\end{proof}

Now, we can prove the following result on the non-negative property
of  $a(t,y)$.
\begin{lemma}\label{lemma3}
Assume that $a(t,y)$ is a classical solution to the problem \eqref{3.10} on $[0,T]$. If $a_{0}(y)>0$ for all $y>0$, then we have $a(t,y)\geq0$ on $[0,T]\times[0,\infty)$.
\end{lemma}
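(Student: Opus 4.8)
The plan is to prove non-negativity of $a$ via a maximum-principle argument applied to a suitably modified unknown, exploiting the structure of the nonlinear and nonlocal terms together with the sign information already collected. First I would introduce a shifted function $b(t,y) = a(t,y)e^{-\lambda t}$ for a large constant $\lambda$ to be chosen in terms of $C_T$, $C_E$, $C_P$, $T$, and the bounds from Lemma 3.2; the equation for $b$ reads
\begin{equation*}
\partial_t b - \partial_y^2 b = \big(a - \lambda\big)b - \partial_y^{-1}a\,\partial_y b + e^{-\lambda t}L[a] + e^{-\lambda t}G,
\end{equation*}
using that the quadratic term $a^2$ becomes $a\,b$ after the substitution. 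The boundary data for $b$ are $b|_{y=0}=0$, $\lim_{y\to\infty}b = (C_Pt + C_E + \bar U^E(t,0))e^{-\lambda t}$, which is nonnegative by the definitions \eqref{3.7-0} of $C_E$ and $C_P$, and $b|_{t=0}=a_0 > 0$.

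The key observations are as follows. The term $L[a] = -2a\phi + \partial_y^{-1}\phi\,\partial_y a + \partial_y^{-1}a\,\partial_y\phi$ is, up to the transport-type piece $\partial_y^{-1}\phi\,\partial_y a$ which can be absorbed as a drift coefficient (note $\partial_y^{-1}\phi \ge 0$ by Lemma 3.3(ii)), a zeroth-order operator in $a$: indeed $\partial_y^{-1}a\,\partial_y\phi = \int_0^y a(t,z)\,dz\cdot\partial_y\phi$ and $-2a\phi$ are both linear in $a$ with coefficients controlled by $\phi$ and $\partial_y\phi$, and crucially $\partial_y\phi\ge 0$, $\phi\ge 0$. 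For the forcing $G = \phi^2 - \partial_y^{-1}\phi\,\partial_y\phi + C_P - \bar P(t,0) - \tilde s$: we have $\phi^2 \ge 0$; $C_P - \bar P(t,0) \ge 0$ by the definition of $C_P$; $-\tilde s \ge 0$ by Lemma 3.4; and $\partial_y^{-1}\phi\,\partial_y\phi = \partial_y\big(\tfrac12(\partial_y^{-1}\phi)^2\big)$ — more usefully, one checks $\phi^2 - \partial_y^{-1}\phi\,\partial_y\phi \ge 0$ using $\partial_y^2\phi \le 0$ (Lemma 3.3(iv)), since this combination equals $\int_0^y \big(\partial_y\phi(t,y) - \partial_y\phi(t,z)\big)\phi(t,z)\,dz + \text{boundary-type terms}$ and concavity gives $\partial_y\phi(t,y) \le \partial_y\phi(t,z)$ for... — here care is needed with signs, so the honest statement is that $G \ge 0$ must be verified from Lemma 3.3, and this verification is the technical heart of the lemma.

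I would then argue by contradiction: suppose $b$ becomes negative somewhere on $[0,T]\times[0,\infty)$. After checking that $b$ cannot escape to $-\infty$ at spatial infinity (using the explicit asymptotics and, if needed, a further comparison function of the form $-\delta(1+t)$ or $-\delta e^{-y^2/(Mt)}$ to rule out a minimum at $y=\infty$ — this is the standard technical patch for maximum principles on unbounded domains), there is a first time $t_*$ and a point $y_*\in(0,\infty)$ where $b(t_*,y_*)=0$ and $b(t,\cdot)\ge 0$ for $t<t_*$, with $b$ attaining an interior negative value just after — more precisely apply the argument to $b+\varepsilon(1+t)$ and let $\varepsilon\to 0$. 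At such a first touching point one has $\partial_t b \le 0$, $\partial_y b = 0$, $\partial_y^2 b \ge 0$, while the right-hand side at that point equals $(a-\lambda)\cdot 0 - 0 + e^{-\lambda t}L[a] + e^{-\lambda t}G$; the drift term vanishes since $\partial_y b=0$, the $L[a]$ contribution reduces to $e^{-\lambda t}\big(-2a\phi + \partial_y^{-1}a\,\partial_y\phi\big)$ which at a point where $a=0$ but $\partial_y^{-1}a = \int_0^{y_*}a\,dz \ge 0$ (as $a\ge 0$ on $[0,t_*)$ and by continuity at $t_*$) is $\ge 0$, and $G \ge 0$; hence the right-hand side is $\ge 0$, contradicting $\partial_y^2 b \ge 0$ and $\partial_t b \le 0$ unless everything vanishes, and the $\varepsilon$-perturbation upgrades this to a strict contradiction. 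Therefore $b\ge 0$, i.e. $a\ge 0$ on $[0,T]\times[0,\infty)$.

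The main obstacle I expect is twofold: (i) establishing $G\ge 0$ rigorously — specifically the sign of $\phi^2 - \partial_y^{-1}\phi\,\partial_y\phi$ — which requires a clean use of the concavity of $\phi$ in $y$ from Lemma 3.3(iv), perhaps by writing $\phi(t,y) = \int_0^y \partial_y\phi(t,z)\,dz$ and comparing with $y\,\partial_y\phi(t,y)$; and (ii) handling the nonlocal drift $\partial_y^{-1}a\,\partial_y a$ and the term $\partial_y^{-1}\phi\,\partial_y b$ at the boundary of the domain and at spatial infinity, where the maximum principle needs the auxiliary barrier to guarantee the extremum is attained at a finite interior point. Both are standard in spirit — mirroring the argument in \cite{KVW} — but the temperature coupling through $\tilde s$ (handled by Lemma 3.4) and the extra buoyancy forcing are the new ingredients that must be threaded through the sign bookkeeping.
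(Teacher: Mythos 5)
Your strategy---a first-touching-point/minimum-principle argument at an interior point $(t_*,y_*)$ where $a$ first vanishes, using $\partial_y^{-1}a\geq0$, $\partial_y\phi\geq0$, $\phi\geq0$ and $\tilde s\leq0$ to force the right-hand side of \eqref{3.10} to be nonnegative there---is exactly the paper's argument, and your added technical scaffolding (the shift $b=ae^{-\lambda t}$, the $\varepsilon(1+t)$ perturbation, a barrier at $y=\infty$) is harmless; note, though, that the $\lambda$-shift buys nothing here, since $b=0$ at the touching point annihilates the zeroth-order term $(a-\lambda)b$ anyway. The one place your write-up does not close is the step you yourself flag: the sign of $\phi^{2}-\partial_{y}^{-1}\phi\,\partial_{y}\phi$. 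This is a genuine gap in the sense that without it $G\geq0$ is not established and the whole contradiction collapses, but it is filled by a one-line computation using precisely the two facts you identified ($\phi\geq0$ and the concavity $\partial_{y}^{2}\phi\leq0$ from Lemma \ref{lemma1}): since the quantity vanishes at $y=0$,
\begin{equation*}
\phi^{2}-\partial_{y}^{-1}\phi\,\partial_{y}\phi
=\int_{0}^{y}\bigl(\phi\,\partial_{y}\phi-\partial_{y}^{-1}\phi\,\partial_{y}^{2}\phi\bigr)\,dz
\geq\int_{0}^{y}\phi\,\partial_{y}\phi\,dz=\frac{1}{2}\phi^{2}\geq0,
\end{equation*}
because $\partial_{y}^{-1}\phi\geq0$ and $\partial_{y}^{2}\phi\leq0$. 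This is exactly how the paper proceeds.

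Two smaller remarks. First, the contradiction must be \emph{strict}: at the touching point one only gets $0\geq\partial_{t}a=\partial_{y}^{2}a+\dots\geq0$ unless some term is strictly positive. The paper secures this by observing $G(t_*,y_*)\geq\tfrac12\phi^{2}(t_*,y_*)>0$ for $y_*>0$ (valid whenever $\phi\not\equiv0$); your $\varepsilon$-perturbation is an acceptable, and in the degenerate case $C_E=C_P=0$ arguably more robust, alternative, but you should carry it through explicitly rather than leave it as an aside. Second, your claim that $\partial_{y}^{-1}a(t_*,y_*)\geq0$ at the first touching time is correct and is used in the same way in the paper, where it gives $L[a](t_*,y_*)=(\partial_{y}^{-1}a\,\partial_{y}\phi)(t_*,y_*)\geq0$ and $(\partial_{y}^{2}a+a^{2}-\partial_{y}^{-1}a\,\partial_{y}a)(t_*,y_*)\geq0$. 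With the displayed inequality inserted, your proof is complete and coincides with the paper's.
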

\begin{proof}
We argue by contradiction. Assume that  $a(t,y)$ reaches a negative value in $[0,T]\times(0,\infty)$, since $a_{0}(y)>0$ for all $y\in(0,\infty)$, there must exist a first time $t_{0}$ and an interior point $y_{0}>0$, such that
\begin{equation}\label{3.11-0}
a(t_{0},y_{0})=0,~\partial_{t}a(t_{0},y_{0})\leq 0,
~\partial_{y}a(t_{0},y_{0})=0,~\partial_{y}^{2}a(t_{0},y_{0})\geq 0,
\end{equation}
and $a(t_{0},y)\geq 0$ for all $y>0$. Thus we have
\begin{equation}\label{3.11-00}
\begin{cases}
(\partial_{y}^{2}a+a^{2}-\partial_{y}^{-1}a\partial_{y}a)(t_{0},y_{0})\geq 0,\\
L[a](t_{0},y_{0})=(\partial_{y}^{-1}a \partial_{y}\phi)(t_{0},y_{0})\geq 0.
\end{cases}
\end{equation}

On the other hand, from the definition of $G$ given in \eqref{3.10-1}, we know that 
\begin{align}\label{3.11}
G(t,y)\geq& \phi^{2}(t,y)-\partial_{y}^{-1}\phi(t,y)\partial_{y}\phi(t,y)
=\partial_{y}^{-1}(\phi\partial_{y}\phi-\partial_{y}^{-1}\phi\partial_{y}^{2}\phi)(t,y)\\
\geq& \partial_{y}^{-1}(\phi\partial_{y}\phi)(t,y)
=\frac{1}{2}\phi^{2}(t,y),\nonumber
\end{align}
by using Lemma 3.1, thus we get $G(t_{0},y_{0})>0$.
By using \eqref{3.11-00} and \eqref{3.11} in the first equation given in \eqref{3.10}, it follows that $\partial_{t}a(t_{0},y_{0})>0$, which is a contradiction to the assumption $\partial_{t}a(t_{0},y_{0})\le 0$ given in \eqref{3.11-0}.
\end{proof}

\subsection{Construction of a Lyapunov functional}

Now we construct a Lyapunov functional for the solution $a(t,y)$ of the problem \eqref{3.10}. Consider a function $\mathcal{Y}(y)$ satisfying 
\begin{align}\label{3.12}
\begin{cases}
\mathcal{Y}\geq 0,~\mathcal{Y}\in W^{2,\infty}(\mathbb{R}^{+}),~\mathcal{Y}\in L^{1}(\mathbb{R}^{+}),\\
\mathcal{Y}|_{y=0}=\mathcal{Y}|_{y=\infty}=0,~
\mathcal{Y}'|_{y=\beta}=\mathcal{Y}'|_{y=\infty}=0,\\
\mathcal{Y}''\leq 0,~\forall y\in(\alpha,\gamma),\\
\mathcal{Y}''\geq 0, ~\forall y\in [0,\alpha]\cup[\gamma,\infty),\\
\frac{1}{2}\mathcal{Y}'(0)+\mathcal{Y}'(\delta)+2\mathcal{Y}'(\gamma)\geq 0,\\
\inf_{y\in[\alpha,\delta]}\mathcal{Y}(y)(\delta-\alpha)^{-1}
+\mathcal{Y}'(\delta)+2\mathcal{Y}'(\gamma)\geq 0,\\
y\mathcal{Y}'\leq\mu\mathcal{Y},~\forall y\in(0,\beta),\\
\mathcal{Y}''\geq -\lambda\mathcal{Y}, ~\forall y\in(\alpha,\gamma),\\
\frac{{\mathcal{Y}'}^{2}}{\mathcal{Y}\mathcal{Y}''}\leq\sigma <1, ~\forall y\in[\delta,\infty),
\end{cases}
\end{align}
for fixed $0<\alpha<\beta<\gamma<\delta<\infty$, and $\mu,\lambda,\sigma>0$.

With such a weight function, we define a Lyapunov functional
\begin{align}\label{3.12-1}
\mathcal{G}(t)=\int_{0}^{\infty}a(t,y)\mathcal{Y}(y)dy,
\end{align}
for the solution $a(t,y)$ of the problem \eqref{3.10}.

\begin{remark}
It is not difficult to construct a function satisfying \eqref{3.12}. For example, one can take
\begin{equation*}
\mathcal{Y}(y)=
\begin{cases}
\frac{6}{5}Ay,~y\in[0,\frac{1}{2}), \\
-\frac{8A}{5}y^{3}+\frac{12A}{5}y^{2}+\frac{A}{5},~y\in[\frac{1}{2},1),\\
By^{3}+Dy^{2}+Ey+F,~y\in[1,2),\\
\frac{M^{n}}{(y+M-2)^{n}},~y\in[2,\infty),
\end{cases}
\end{equation*}
where $n$ and $M$ are positive constants and $$A=1+\frac{2n}{3M}+\frac{n(n+1)}{6M^{2}},~B=\frac{n}{3M}+\frac{n(n+1)}{3M^{2}},$$
$$D=-\frac{2n}{M}-\frac{3n(n+1)}{2M^{2}},~E=\frac{3n}{M}+\frac{2n(n+1)}{M^{2}},$$  $$F=1-\frac{2n}{3M}-\frac{2n(n+1)}{3M^{2}}.$$
In this case, $\alpha=\frac{1}{2}$, $\beta=1$, $\gamma=\frac{4nM+3n(n+1)}{2nM+2n(n+1)}$, $\delta=2$ and $\sigma=\frac{n}{n+1}$. One can choose a constant $n>1$ and then $M$ large enough to ensure that \eqref{3.12} holds, and $\mu$ and $\sigma$ can be obtained by calculation.
\end{remark}

\section{Blowup of the Lyapunov functional}

In this section, we will prove that the Lyapunov functional $\mathcal{G}(t)$ defined in \eqref{3.12-1} blows up within the time interval $(0,T)$ provided that the initial data is suitable large. Then from the definition of $\mathcal{G}(t)$ one can deduce that there exists $0<t_{*}< T$, such that $a(t,y)$ blows up at $t_{*}$, which directly concludes the blowup result claimed in Theorem 2.1.

For this, we have
\begin{prop}\label{pro2}
For the functional $\mathcal{G}(t)$ defined in \eqref{3.12-1}, there is $1<\eta<2$ such that
\begin{equation}\label{4.0}
\mathcal{G}'(t)
\geq 2(1-\eta^{-1})C_{\mathcal{Y}}^{-1}(\mathcal{G}(t))^{2}-[\lambda+(3+\mu)(C_{E}+C_{P}t)]\mathcal{G}(t)
\end{equation}
where $C_{\mathcal{Y}}=\|\mathcal{Y}\|_{L^{1}(\mathbb{R}^{+})}$,  the constants $C_E, C_P$ are given in \eqref{3.7-0}, and parameters $\lambda, \mu$ are given in \eqref{3.12}.
\end{prop}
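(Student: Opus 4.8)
The plan is to differentiate $\mathcal{G}(t)=\int_0^\infty a(t,y)\mathcal{Y}(y)\,dy$ in time, substitute the evolution equation \eqref{3.10} for $\partial_t a$, and then estimate each resulting term using the sign properties of $a$ (Lemma \ref{lemma3}), of $\phi$ (Lemma \ref{lemma1}), and the structural inequalities imposed on $\mathcal{Y}$ in \eqref{3.12}. Concretely, $\mathcal{G}'(t)=\int_0^\infty(\partial_y^2 a+a^2-\partial_y^{-1}a\,\partial_y a+L[a]+G)\mathcal{Y}\,dy$. The diffusion term is integrated by parts twice to become $\int_0^\infty a\mathcal{Y}''\,dy$ (boundary terms vanish since $a|_{y=0}=0$ and $\mathcal{Y}|_{y=0}=\mathcal{Y}|_{y=\infty}=0$, $\mathcal{Y}'|_{y=\infty}=0$, and $a$ decays); splitting the $y$-integral over $[0,\alpha]$, $(\alpha,\gamma)$, $[\gamma,\infty)$ and using $a\ge 0$ together with the sign conditions $\mathcal{Y}''\le 0$ on $(\alpha,\gamma)$, $\mathcal{Y}''\ge 0$ elsewhere, and $\mathcal{Y}''\ge-\lambda\mathcal{Y}$ on $(\alpha,\gamma)$, this term is bounded below by $-\lambda\mathcal{G}(t)$ (plus a nonnegative leftover one discards).

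Next I would handle the quadratic and transport-type terms $\int a^2\mathcal{Y}\,dy$ and $-\int(\partial_y^{-1}a\,\partial_y a)\mathcal{Y}\,dy$. The second is rewritten via integration by parts: $-\int_0^\infty \partial_y^{-1}a\,\partial_y a\,\mathcal{Y}\,dy = \tfrac12\int_0^\infty (\partial_y^{-1}a)^2 \mathcal{Y}''\,dy$ after noting $\partial_y^{-1}a\,\partial_y a=\tfrac12\partial_y((\partial_y^{-1}a)^2)'$... more precisely one uses $a\,\partial_y^{-1}a = \tfrac12\partial_y((\partial_y^{-1}a)^2)$ so $-\int(\partial_y^{-1}a\,\partial_y a)\mathcal{Y} = -\int a\,\partial_y^{-1}a\,\mathcal{Y}$ up to a sign bookkeeping that the full proof will make precise, and then an integration by parts produces $(\partial_y^{-1}a)^2\mathcal{Y}''$; on $[\delta,\infty)$ the key constraint $\mathcal{Y}'^2\le\sigma\mathcal{Y}\mathcal{Y}''$ with $\sigma<1$, combined with Cauchy–Schwarz applied to $a=\partial_y(\partial_y^{-1}a)$ against the weight, lets one absorb the bad contribution of the quadratic term into the good one there, gaining the factor $(1-\eta^{-1})$ for some $1<\eta<2$ (taking $\eta$ close to $1/\sigma$ or similar). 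The lower-order interval contributions of the quadratic term are controlled against $\mathcal{G}$ using $a\le a_{\max}\lesssim C_E+C_Pt$ via Lemma \ref{lemma1}(iii) and the bound $\lim_{y\to\infty}a = C_Pt+C_E+\bar U^E(t,0)$, producing the $-(3+\mu)(C_E+C_Pt)\mathcal{G}(t)$ term; the weight condition $y\mathcal{Y}'\le\mu\mathcal{Y}$ on $(0,\beta)$ enters when estimating terms where a factor of $y$ (from $\partial_y^{-1}$ acting on a bounded quantity) appears.

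Then I would dispose of $L[a]=-2a\phi+\partial_y^{-1}\phi\,\partial_y a+\partial_y^{-1}a\,\partial_y\phi$ and $G$. For $G$, by \eqref{3.11} one has $G\ge\tfrac12\phi^2\ge0$, so $\int G\mathcal{Y}\ge0$ and this term is simply dropped from the lower bound. For $L[a]$: $-2\int a\phi\mathcal{Y}\ge -2(C_E+C_Pt)\mathcal{G}$ by Lemma \ref{lemma1}(iii); the term $\int\partial_y^{-1}\phi\,\partial_y a\,\mathcal{Y}\,dy$ is integrated by parts to move the derivative off $a$, generating $-\int\phi\mathcal{Y} a\,dy$ type pieces plus $-\int\partial_y^{-1}\phi\, a\,\mathcal{Y}'\,dy$, and the latter is estimated using $\partial_y^{-1}\phi\le(C_E+C_Pt)y$ and again $y\mathcal{Y}'\le\mu\mathcal{Y}$; the term $\int\partial_y^{-1}a\,\partial_y\phi\,\mathcal{Y}\,dy\ge0$ directly since $a\ge0$ and $\partial_y\phi\ge0$ (Lemma \ref{lemma1}(i)). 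Collecting everything yields \eqref{4.0}. The main obstacle will be the careful bookkeeping of boundary terms in the repeated integrations by parts — especially verifying that all boundary contributions at $y=0$ and $y=\infty$ genuinely vanish (which is where the precise cocktail of conditions $\mathcal{Y}|_{y=0}=\mathcal{Y}|_{y=\infty}=0$, $\mathcal{Y}'|_{y=\beta}=\mathcal{Y}'|_{y=\infty}=0$ and the decay/growth of $a$ and $\phi$ matter) — and, more substantially, the Cauchy–Schwarz argument on $[\delta,\infty)$ that trades the quadratic term against the diffusion term using $\mathcal{Y}'^2/(\mathcal{Y}\mathcal{Y}'')\le\sigma<1$ to extract the coefficient $2(1-\eta^{-1})C_{\mathcal{Y}}^{-1}$, since this is the step that must be quantitatively sharp for the eventual Riccati blowup argument to close.
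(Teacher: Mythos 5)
Your overall strategy coincides with the paper's: differentiate $\mathcal{G}$, drop $\int G\mathcal{Y}\,dy\ge 0$, convert the diffusion term to $\int a\mathcal{Y}''\,dy\ge-\lambda\mathcal{G}$, rewrite $a^2-\partial_y^{-1}a\,\partial_ya$ via the identity $\partial_y^2\bigl((\partial_y^{-1}a)^2\bigr)=2\bigl(a^2+\partial_y^{-1}a\,\partial_ya\bigr)$, trade the far-field cross term against $\int_\delta^\infty(\partial_y^{-1}a)^2\mathcal{Y}''\,dy$ using Young's inequality and ${\mathcal{Y}'}^2\le\sigma\mathcal{Y}\mathcal{Y}''$ with $\eta\sigma<1$, and extract $-(3+\mu)(C_E+C_Pt)\mathcal{G}$ from $L[a]$ exactly as the paper does (your $L[a]$ paragraph is correct).

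There is, however, one genuine gap: the intermediate-range cross terms $\int(\partial_y^{-1}a)\,a\,\mathcal{Y}'\,dy$ on $[0,\beta]$ and $[\beta,\delta]$, together with $\tfrac12\int_\beta^\delta(\partial_y^{-1}a)^2\mathcal{Y}''\,dy$. You propose to control these ``against $\mathcal{G}$ using $a\le a_{\max}\lesssim C_E+C_Pt$ via Lemma \ref{lemma1}(iii).'' But Lemma \ref{lemma1}(iii) bounds only the lift $\phi$, not $a=\tilde w+\phi$: the component $\tilde w=-\partial_xu(t,0,\cdot)$ is precisely the quantity whose blowup is being proved, so no a priori bound $a\lesssim C_E+C_Pt$ exists, and the boundary value at $y=\infty$ is not a global bound. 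Falling back on the contradiction hypothesis \eqref{2.4} would give $a\le C_T+C_E+C_Pt$, but then the damping coefficient in \eqref{4.0} would depend on $C_T$, which both contradicts the stated inequality and breaks the subsequent Riccati argument (the required size of $\mathcal{G}(0)$ would depend on the unknown $C_T$). The paper instead reduces these contributions, after integration by parts and monotonicity of $\partial_y^{-1}a$, to the quantities $\bigl[\tfrac12\mathcal{Y}'(0)+\mathcal{Y}'(\delta)+2\mathcal{Y}'(\gamma)\bigr]\bigl((\partial_y^{-1}a)(t,\alpha)\bigr)^2$ and $\bigl[\inf_{[\alpha,\delta]}\mathcal{Y}\,(\delta-\alpha)^{-1}+\mathcal{Y}'(\delta)+2\mathcal{Y}'(\gamma)\bigr]\bigl(\int_\alpha^\delta a\,dy\bigr)^2$, the second using a H\"older lower bound on $\int_\alpha^\delta a^2\mathcal{Y}\,dy$; nonnegativity of both brackets is exactly the fifth and sixth conditions in \eqref{3.12}, which your plan never invokes. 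Without this step the lower bound does not close. (A minor further slip: $\partial_y^{-1}a\,\partial_ya\ne a\,\partial_y^{-1}a$; it is the displayed identity above that produces the extra $2\int a^2\mathcal{Y}\,dy$ and the cross terms in the first place.)
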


If this proposition is true, then we have
\begin{align*}
\mathcal{G}(t)
\geq& \left(\frac{1}{\mathcal{G}(0)}-\frac{2-2\eta^{-1}}{C_{\mathcal{Y}}}\int_{0}^{t}\Psi(s)ds\right)^{-1}
\Psi(t),
\end{align*}
where
\begin{align*}
\Psi(t)=
e^{-\left[\lambda t+(3+\mu)C_{E}t+\frac{3+\mu}{2}C_{P}t^{2}\right]}.
\end{align*}
Therefore,  if the initial data  $\mathcal{G}(0)$ is large enough, e.g.
\begin{align*}
\mathcal{G}(0)
\geq & \frac{C_{\mathcal{Y}}}{2-2\eta^{-1}}\left(\int_{0}^{\frac{T}{2}}e^{-\left[\lambda t+(3+\mu)C_{E}(T)t+\frac{3+\mu}{2}C_{P}(T)t^{2}\right]}dt\right)^{-1}, 
\end{align*}
then
$\mathcal{G}(t)$ blows up within $(0,T)$. Thus, the blowup result claimed in Theorem 2.1 follows.

\vspace{.1in}
\underline{\bf Proof of Proposition \ref{pro2}:}
By using the first equation given in  \eqref{3.10}, and the non-negative property of $G$ obtained in \eqref{3.11}, we have
\begin{align}\label{4.1}
\frac{d}{dt}\mathcal{G}
=&\int_{0}^{\infty}\partial_{y}^{2}a\mathcal{Y}dy
+\int_{0}^{\infty}\left(a^{2}-(\partial_{y}^{-1}a)\partial_{y}a\right)\mathcal{Y}dy
+\int_{0}^{\infty}L[a]\mathcal{Y} dy
+\int_{0}^{\infty}G\mathcal{Y} dy\\
\geq& \int_{0}^{\infty}\partial_{y}^{2}a\mathcal{Y}dy
+\int_{0}^{\infty}\left(a^{2}-(\partial_{y}^{-1}a)\partial_{y}a\right)\mathcal{Y}dy
+\int_{0}^{\infty}L[a]\mathcal{Y} dy\nonumber\\
= & \int_{0}^{\infty}\partial_{y}^{2}a\mathcal{Y}dy
+\frac{1}{2}\int_{0}^{\infty}\partial_{y}^{2}[(\partial_{y}^{-1}a)^{2}]\mathcal{Y}dy
-2\int_{0}^{\infty}(\partial_{y}^{-1}a)\partial_{y}a\mathcal{Y}dy
+\int_{0}^{\infty}L[a]\mathcal{Y} dy\nonumber,
\end{align}
by noting 
$$\partial_y^2((\partial_y^{-1}a)^2)=2(a^2+(\partial_y^{-1}a)(\partial_ya)).$$

By  integration by parts and using \eqref{3.12}, we obtain
\begin{align}\label{4.2}
-2\int_{0}^{\infty}\partial_{y}^{-1}a\partial_{y}a\mathcal{Y}dy
=&2\int_{0}^{\infty}a^{2}\mathcal{Y}dy
+2\int_{0}^{\infty}(\partial_{y}^{-1}a)a\mathcal{Y}'dy\\
=&2\int_{0}^{\infty}a^{2}\mathcal{Y}dy
-\frac{1}{2}\int_{0}^{\beta}(\partial_{y}^{-1}a)^{2}\mathcal{Y}''dy
+\int_{0}^{\beta}(\partial_{y}^{-1}a)a\mathcal{Y}'dy\nonumber\\
&+2\int_{\beta}^{\infty}(\partial_{y}^{-1}a)a\mathcal{Y}'dy\nonumber.
\end{align}

Plugging  \eqref{4.2} into \eqref{4.1}, and by integration by parts it follows that
\begin{align}\label{4.3}
\frac{d}{dt}\mathcal{G}
\geq& \int_{0}^{\infty}a\mathcal{Y}''dy
+\frac{1}{2}\int_{\beta}^{\infty}[(\partial_{y}^{-1}a)^{2}]\mathcal{Y}''dy
+2\int_{0}^{\infty}a^{2}\mathcal{Y}dy
+\int_{0}^{\beta}(\partial_{y}^{-1}a)a\mathcal{Y}'dy\\
&+2\int_{\beta}^{\infty}(\partial_{y}^{-1}a)a\mathcal{Y}'dy
+\int_{0}^{\infty}L[a]\mathcal{Y} dy\nonumber\\
:=&\sum_{i=1}^{6}\mathcal{R}_{i}\nonumber,
\end{align}
with obvious notions $\mathcal{R}_{i}(1\leq i\leq 6)$. Now we study each $\mathcal{R}_{i}$ respectively.

i) By using \eqref{3.12}, we can bound $\mathcal{R}_{1}$ as follows,
\begin{align}\label{4.4}
\mathcal{R}_{1}
\geq -\lambda\int_{0}^{\infty}a\mathcal{Y}dy
= -\lambda\mathcal{G}.
\end{align}

ii) Decompose $\mathcal{R}_{2}$ into two parts,
\begin{align*}
\mathcal{R}_{2}
=\frac{1}{2}\int_{\beta}^{\delta}[(\partial_{y}^{-1}a)^{2}]\mathcal{Y}''dy
+\frac{1}{2}\int_{\delta}^{\infty}[(\partial_{y}^{-1}a)^{2}]\mathcal{Y}''dy.
\end{align*}
By using \eqref{3.12} and $\mathcal{Y}'(\delta)\le 0$, we have
\begin{align*}
\frac{1}{2}\int_{\beta}^{\delta}[(\partial_{y}^{-1}a)^{2}]\mathcal{Y}''dy
\geq& \frac{1}{2}\int_{\beta}^{\gamma}\left((\partial_{y}^{-1}a)(t,\gamma)\right)^{2}\mathcal{Y}''dy
+\frac{1}{2}\int_{\gamma}^{\delta}\left((\partial_{y}^{-1}a)(t,\gamma)\right)^{2}\mathcal{Y}''dy\\
=&\frac{1}{2}\mathcal{Y}'(\delta)\left((\partial_{y}^{-1}a)(t,\gamma)\right)^{2}
\geq \mathcal{Y}'(\delta)\left((\partial_{y}^{-1}a)(t,\alpha)\right)^{2}
+\mathcal{Y}'(\delta)\left(\int_{\alpha}^{\gamma}ady\right)^{2} \nonumber,
\end{align*}
which implies
\begin{align}\label{4.5}
\mathcal{R}_{2}
\geq \mathcal{Y}'(\delta)\left((\partial_{y}^{-1}a)(t,\alpha)\right)^{2}
+\mathcal{Y}'(\delta)\left(\int_{\alpha}^{\gamma}ady\right)^{2}
+\frac{1}{2}\int_{\delta}^{\infty}[(\partial_{y}^{-1}a)^{2}]\mathcal{Y}''dy.
\end{align}

iii) Since by using the H\"{o}lder inequality,
\begin{align*}
\int_{\alpha}^{\delta}a^{2}\mathcal{Y}dy
\geq \inf_{y\in[\alpha,\delta]}\mathcal{Y}(y)(\delta-\alpha)^{-1}\left(\int_{\alpha}^{\delta}ady\right)^{2},
\end{align*}
we get that
\begin{align}\label{4.6}
\mathcal{R}_{3}
\geq 2\left(\int_{0}^{\alpha}+\int_{\delta}^{\infty}\right)a^{2}\mathcal{Y}dy
+\int_{\alpha}^{\delta}a^{2}\mathcal{Y}dy
+\inf_{y\in[\alpha,\delta]}\mathcal{Y}(y)(\delta-\alpha)^{-1}\left(\int_{\alpha}^{\delta}ady\right)^{2}.
\end{align}

iv) By using the property of the weight function given in  \eqref{3.12}, it is easy to deduce that
\begin{align}\label{4.7}
\mathcal{R}_{4}
\geq \mathcal{Y}'(0)\int_{0}^{\alpha}a\partial_{y}^{-1}a dy
\geq \frac{1}{2}\mathcal{Y}'(0)\left((\partial_{y}^{-1}a)(t, \alpha)\right)^{2}.
\end{align}

v) Decompose $\mathcal{R}_{5}$ into two parts,
\begin{align*}
\mathcal{R}_{5}
=2\int_{\beta}^{\delta}(\partial_{y}^{-1}a)a\mathcal{Y}'dy
+2\int_{\delta}^{\infty}(\partial_{y}^{-1}a)a\mathcal{Y}'dy
:=\mathcal{R}_{5}^{1}+\mathcal{R}_{5}^{2}.
\end{align*}
For $\mathcal{R}_{5}^{1}$, by using \eqref{3.12} we have
\begin{align}\label{4.8}
\mathcal{R}_{5}^{1}
\geq 2\mathcal{Y}'(\gamma)\int_{\beta}^{\delta}(\partial_{y}^{-1}a)ady
= \mathcal{Y}'(\gamma)\left((\partial_{y}^{-1}a)(t,\delta)\right)^{2}
\geq 2\mathcal{Y}'(\gamma)\left(\left((\partial_{y}^{-1}a)(t,\alpha)\right)^{2}
+\left(\int_{\alpha}^{\delta}ady\right)^{2}\right),
\end{align}
by noting $\mathcal{Y}'(\gamma)\le 0$.
For $\mathcal{R}_{5}^{2}$, by using the Young inequality it follows that for any fixed 
$\eta>0$, 
\begin{align}\label{4.9}
\mathcal{R}_{5}^{2}
\geq & -\frac{\eta}{2}\int_{\delta}^{\infty}\left(\partial_{y}^{-1}a\right)^{2}\frac{{\mathcal{Y}'}^{2}}{\mathcal{Y}}dy
-2\eta^{-1}\int_{\delta}^{\infty}a^{2}\mathcal{Y}dy,\nonumber\\
\geq & -\frac{\eta\sigma}{2}\int_{\delta}^{\infty}\left(\partial_{y}^{-1}a\right)^{2}\mathcal{Y}''dy
-2\eta^{-1}\int_{\delta}^{\infty}a^{2}\mathcal{Y}dy,
\end{align}
by using \eqref{3.12}.

Thus, from \eqref{4.8} and \eqref{4.9} we deduce that
\begin{align}\label{4.10}
\mathcal{R}_{5}
\geq  2\mathcal{Y}'(\gamma)\left(\left((\partial_{y}^{-1}a)(t,\alpha)\right)^{2}
+\left(\int_{\alpha}^{\delta}ady\right)^{2}\right)
-\frac{\eta\sigma}{2}\int_{\delta}^{\infty}\left(\partial_{y}^{-1}a\right)^{2}\mathcal{Y}''dy
-2\eta^{-1}\int_{\delta}^{\infty}a^{2}\mathcal{Y}dy.
\end{align}

vi) By integration by parts and using Lemmas 3.1 and 3.3, we obtain
\begin{align*}
\mathcal{R}_{6}
=& -2\int_{0}^{\infty}a\phi\mathcal{Y}dy
+\int_{0}^{\infty}\partial_{y}^{-1}\phi \partial_{y}a\mathcal{Y}dy
+\int_{0}^{\infty}\partial_{y}^{-1}a \partial_{y}\phi\mathcal{Y}dy\\
=& -3\int_{0}^{\infty}a\phi\mathcal{Y}dy
-\int_{0}^{\infty}\partial_{y}^{-1}\phi a\mathcal{Y}'dy
+\int_{0}^{\infty}\partial_{y}^{-1}a \partial_{y}\phi\mathcal{Y}dy\\
\geq& -3\int_{0}^{\infty}a\phi\mathcal{Y}dy
-\int_{0}^{\beta}\partial_{y}^{-1}\phi a\mathcal{Y}'dy,
\end{align*}
by using $\mathcal{Y}'(y)\le 0$ for all $\beta<y<+\infty$.

Then, by using Lemma 3.1(iii), $\mathcal{Y}'(y)\ge 0$ on $[0, \beta]$ and \eqref{3.12} we deduce that
\begin{align}\label{4.11}
\mathcal{R}_{6}
\geq& -3(C_{E}+C_{P}t)\int_{0}^{\infty}a\mathcal{Y}dy
-(C_{E}+C_{P}t)\int_{0}^{\beta}ay\mathcal{Y}'dy \\
\geq& -(3+\mu)(C_{E}+C_{P}t)\mathcal{G}.\nonumber
\end{align}

Combining \eqref{4.4}-\eqref{4.11} with \eqref{4.3} we obtain
\begin{align}\label{4.12}
\frac{d}{dt}\mathcal{G}
\geq& -[\lambda+(3+\mu)(C_{E}+C_{P}t)]\mathcal{G}
+\left[\frac{1}{2}\mathcal{Y}'(0)+\mathcal{Y}'(\delta)+2\mathcal{Y}'(\gamma)\right]
\left(\partial_{y}^{-1}a(\alpha)\right)^{2}\\
&+\left[\inf_{y\in[\alpha,\delta]}\mathcal{Y}(y)(\delta-\alpha)^{-1}
+\mathcal{Y}'(\delta)+2\mathcal{Y}'(\gamma)\right]\left(\int_{\alpha}^{\delta}ady\right)^{2}\nonumber\\
&+\frac{1-\eta\sigma}{2}\int_{\delta}^{\infty}[(\partial_{y}^{-1}a)^{2}]\mathcal{Y}''dy
+2\int_{0}^{\alpha}a^{2}\mathcal{Y}dy
+\int_{\alpha}^{\delta}a^{2}\mathcal{Y}dy\nonumber\\
&+(2-2\eta^{-1})\int_{\delta}^{\infty}a^{2}\mathcal{Y}dy\nonumber.
\end{align}

Since $\sigma<1$ given in \eqref{3.12}, one can choose $1<\eta<2$ such that $\eta\sigma<1$, 
Thus, by using \eqref{3.12}, from \eqref{4.12} we get
\begin{align*}
\frac{d}{dt}\mathcal{G}
\geq& -[\lambda+(3+\mu)(C_{E}+C_{P}t)]\mathcal{G}
+(2-2\eta^{-1})\int_{0}^{\infty}a^{2}\mathcal{Y}dy\\
\geq& -[\lambda+(3+\mu)(C_{E}+C_{P}t)]\mathcal{G}
+(2-2\eta^{-1})C_{\mathcal{Y}}^{-1}\mathcal{G}^{2},
\end{align*}
where $C_{\mathcal{Y}}=\|\mathcal{Y}\|_{L^{1}(\mathbb{R}^{+})}$.

\vspace{.2in}

\noindent{\bf Acknowledgments}
This research was partially supported by
National Natural Science Foundation of China (NNSFC) under Grant No. 11631008.

\end{document}